\newtheorem{theorem}{Theorem}
\newtheorem{corollary}[theorem]{Corollary}
\newtheorem{remark}[theorem]{Remark}
\newenvironment{proof}[1][Proof]{\noindent\textbf{#1.} }{\ \rule{0.5em}{0.5em}}
\begin{document}

\begin{center}
{\LARGE A Convergence Criterion for the Solutions of Nonlinear}

{\LARGE Difference Equations and Dynamical Systems}\medskip

H. SEDAGHAT \footnote{Department of Mathematics, Virginia Commonwealth
University, Richmond, VA \ 23284, USA;
\par
Email: hsedagha@vcu.edu}
\end{center}

\medskip

\begin{abstract}
A general sufficient condition for the convergence of subsequences of solutions of non-autonomous, nonlinear difference equations and systems is obtained. For higher order equations the delay sizes and patterns play essential roles in determining which subsequences of solutions converge. For systems the specific manner in which the equations are related is important and lead to different criteria. Applications to discrete dynamical systems, including some that model populations of certain species are discussed.

\end{abstract}

\section{Introduction}

Consider the higher-order difference equation or recurrence%
\begin{equation}
x_{n}=F_{n}(x_{n-1},x_{n-2},\ldots,x_{n-m}),\quad n=1,2,3,\ldots\label{g}%
\end{equation}
where the order $m\geq2$ is an integer and $F_{n}:D\rightarrow\mathbb{R}$ for
every $n.$ Here $D$ is a subset of $\mathbb{R}^{m}$ that contains the origin
and is invariant under the standard unfolding
\[
\widehat{F}_{n}(u_{1},u_{2},\ldots,u_{m})=[F_{n}(u_{1},u_{2},\ldots
,u_{m}),u_{1},\ldots,u_{m-1}]
\]
to a system in $\mathbb{R}^{m}$.

Possible disparities among the $m$ variables may lead to convergence only of
certain subsequences of a solution of (\ref{g}) to zero. Such a behavior is
not exceptional; it is indeed observed in certain biological population
models. In \cite{LS1}, Lemma 8, we find this type of behavior exhibited by
interacting adult and juvenile populations; also see Corollary \ref{R} below.
This leads to nontrivial issues regarding the nature of the strong Allee
effect in biological models; see \cite{Cour}, \cite{Cush}, \cite{ES}, \cite{LS},
\cite{LS1}, \cite{LEO}\ for background and further information.

There does not seem to be any systematic research into converging subsequences of 
solutions of difference equations in the existing literature. In this paper we 
study conditions that imply the convergence of certain
subsequences of a solution of (\ref{g}) to zero, including cases where the
entire solution converges; see Theorem \ref{sc}\ and\ Corollary \ref{ct}.
These results also indicate that the size $m$ and the specific pattern of
delay in (\ref{g}) play essential roles in determining which subsequences converge.

This type of subsequence convergence is manifested in nonlinear equations and
systems in dimensions two and greater, since both nonlinearity and at least
two variables are required (see conditions H1-H3 below) for the existence of a
disparity of the type where a proper subsequence of a solution may converge.
Therefore, we do not expect to observe this type of behavior in first-order
equations, or in linear systems in any dimension.

The results on higher order equations also apply to systems by folding the
system to a scalar difference equation (of order two or greater; see \cite{HS}). In
particular, we discuss the convergence of subsequences of solutions of certain
planar systems that are used in modeling biological population dynamics. We
obtain sufficient conditions on the system's equations directly using
additional system-specific conditions H5 and H6, thus avoiding explicit
folding calculations in specific problems.

\section{Convergence in higher order equations}

We assume that the functions $F_{n}$ satisfy the following hypotheses:

\begin{quote}
\textbf{H1}. There is a non-negative real function $g$ and $k\in\left\{
1,2,\ldots, m\right\}  $ such that $\left\vert F_{n}(u_{1},\ldots,
u_{m})\right\vert \leq g(u_{k})$ for all $n\geq1$ and all $(u_{1},\ldots,
u_{m})\in D$;

\textbf{H2}. There is $\alpha\in(0,\infty]$ such that $g(u)<|u|$ for all
$u\in(-\alpha,\alpha)\cap\pi_{k}(D)$, $u\not =0$ where $\pi_{k}(D)$ is the
projection of $D$ onto the $k$-th axis, i.e.
\[
\pi_{k}(D)=\{u_{k}:(u_{1},\ldots, u_{k},\ldots, u_{m})\in D\}
\]

\textbf{H3}. The function $g(u)$ is continuous on $(-\alpha,\alpha)\cap\pi
_{k}(D)$.
\end{quote}

Note that the functions $F_{n}$ need not be continuous on $D$ and the number
$\alpha$ may be arbitrarily large. However, H1-H3 above imply that $g(0)=0$
and $F_{n}(0,\ldots,0)=0.$

Hypotheses H1-H3 bound the maps $F_{n}$ in a neighborhood of the origin but
these maps are not otherwise restricted. They need not be monotone, continuous
or have any other specialized properties near the origin. Rather, it is the
map $g$ that, in some neighborhood of the origin, needs to be sublinear by H2
and continuous by H3. We also point out that H1-H3 do not rule out the
possibility that $k=m=1$ (i.e. (\ref{g}) has order 1). In this case, we show
that convergence occurs for the entire solution rather than a proper
subsequence of it. Hence, when the order is 1 the subseqeunce convergence does
not occur properly.

The next result shows that H1-H3 imply the convergence of the subsequences
associated with $u_{k}$ of any solution of (\ref{g}) that comes within the
$\alpha$-distance of the origin.

\begin{theorem}
\label{sc}Assume that $F_{n}$ satisfy H1-H3 and let $\{x_{n}\}$ be a solution
of (\ref{g}). If there is an integer $n_{0}\geq1$ such that $x_{n_{0}}%
\in(-\alpha,\alpha)\cap\pi_{k}(D)$ then $\lim_{j\rightarrow\infty}x_{n_{0}%
+kj}=0.$
\end{theorem}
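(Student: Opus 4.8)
The plan is to isolate the single subsequence $y_j:=x_{n_0+kj}$ and show that $|y_j|\to 0$. The first step is to extract from H1 the scalar recursion governing this subsequence. Because $\{x_n\}$ solves (\ref{g}), for every $n$ the state vector $(x_{n-1},\ldots,x_{n-m})$ lies in $D$, so H1 gives $|x_n|=|F_n(x_{n-1},\ldots,x_{n-m})|\le g(x_{n-k})$. Taking $n=n_0+k(j+1)$, so that $n-k=n_0+kj$, this becomes
\[
|y_{j+1}|\le g(y_j),\qquad j=0,1,2,\ldots.
\]
This single structural fact drives the whole argument; the coupling of terms exactly $k$ apart is precisely what singles out the step-$k$ subsequence appearing in the conclusion.

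Next I would use an induction to trap the subsequence inside $(-\alpha,\alpha)$ and to force its absolute values to decrease. The base case is the hypothesis $y_0=x_{n_0}\in(-\alpha,\alpha)\cap\pi_k(D)$. If $y_j=0$ for some $j$, then $|y_{j+1}|\le g(0)=0$ and the tail vanishes identically, so the conclusion holds at once; I may therefore assume $y_j\neq0$ for all $j$. Suppose $y_j\in(-\alpha,\alpha)\cap\pi_k(D)$ with $y_j\neq0$. Then H2 gives $g(y_j)<|y_j|<\alpha$, so the displayed bound yields $|y_{j+1}|\le g(y_j)<|y_j|<\alpha$; hence $y_{j+1}\in(-\alpha,\alpha)$, while $y_{j+1}\in\pi_k(D)$ because every solution value is the $k$-th coordinate of some state vector lying in $D$. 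Thus $\{|y_j|\}$ is strictly decreasing and bounded below by $0$, so it converges to a limit $L\ge0$ with $L\le|y_0|<\alpha$.

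It remains to show $L=0$, which I expect to be the crux. I would argue by contradiction, assuming $L>0$. Since $\{y_j\}$ is bounded it has a convergent subsequence $y_{j_i}\to c$, and since $|y_j|\to L$ we have $|c|=L\in(0,\alpha)$, so in particular $c\neq0$. Letting $i\to\infty$ in $|y_{j_i+1}|\le g(y_{j_i})$ and invoking the continuity of $g$ from H3 gives $L\le g(c)$, whereas H2 applied at the nonzero point $c$ gives $g(c)<|c|=L$, a contradiction; therefore $L=0$ and $x_{n_0+kj}=y_j\to0$. The delicate point in this last step is the appeal to continuity at the limit $c$: one has to guarantee that $c$ actually lies in the set $(-\alpha,\alpha)\cap\pi_k(D)$ on which H3 is assumed, which is exactly why the uniform bound $|y_{j_i}|<\alpha$ and the membership $y_{j_i}\in\pi_k(D)$ matter (and, if $\pi_k(D)$ fails to be closed, this is where an additional closedness hypothesis on $D$ would be needed). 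The possible oscillation in sign of the $y_j$ causes no difficulty, since the argument uses only the convergent subsequence and the sign-insensitive bound $|y_{j+1}|\le g(y_j)$.
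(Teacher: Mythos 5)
Your proposal is correct, and its skeleton coincides with the paper's: establish $|x_{n_0+(j+1)k}|\le g(x_{n_0+jk})<|x_{n_0+jk}|<\alpha$ by induction, conclude that the absolute values decrease strictly to some limit $L\ge 0$, and then rule out $L>0$ using continuity together with H2. Where you diverge is in the final limit step, which is the only technically delicate point because the terms $x_{n_0+jk}$ themselves may oscillate in sign and need not converge even though $|x_{n_0+jk}|$ does. The paper resolves this by symmetrizing $g$ to the even function $h(u)=\max\{g(u),g(-u)\}$, so that $h(x_{n_0+jk})=h(|x_{n_0+jk}|)\rightarrow h(\zeta)$ along the convergent sequence of absolute values, and then derives $h(\zeta)=\zeta$ by a squeeze argument; you instead extract a convergent subsequence $y_{j_i}\rightarrow c$ with $|c|=L$ via Bolzano--Weierstrass and apply H2 and H3 directly at $c$. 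The two devices are interchangeable here: the symmetrization avoids any compactness argument and works with a single limit point $\zeta\ge 0$ on the positive axis, while your subsequence extraction avoids introducing $h$ at the cost of needing the limit $c$ (rather than $\zeta=|c|$) to lie in the set where $g$ is continuous. The domain caveat you flag --- that the subsequential limit must belong to $(-\alpha,\alpha)\cap\pi_{k}(D)$ for H2 and H3 to apply, which can fail if $\pi_k(D)$ is not closed --- is a genuine observation, but it is not a defect relative to the paper, whose own proof makes exactly the same implicit assumption when it evaluates $h$ at $\zeta=\inf_j|x_{n_0+jk}|$; your write-up simply makes the issue explicit.
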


\begin{proof}
It is convenient to work with a symmetric modification of $g$ so we define
\[
h(u)=\max\{g(u),g(-u)\}
\]
for every $u\in(-\alpha,\alpha)\cap\pi_{k}(D)$. This is an even function since
$h(-u)=h(u)$. Further, $h$ is continuous on $(-\alpha,\alpha)\cap\pi_{k}(D)$,
$h(0)=0$ and%
\[
g(u)\leq h(u)<|u|
\]
for all $u\in(-\alpha,\alpha)\cap\pi_{k}(D).$ Because%
\[
|x_{n_{0}+k}|=|F_{n_{0}+k}(x_{n_{0}+k-1},x_{n_{0}+k-2},\ldots,x_{n_{0}%
},x_{n_{0}-1}\ldots,x_{n_{0}+k-m})|\leq g(x_{n_{0}})\leq h(x_{n_{0}})
\]
it follows that
\[
|x_{n_{0}+k}|\leq h(x_{n_{0}})<|x_{n_{0}}|<\alpha.
\]

These inequalities continue to similarly hold for $x_{n_{0}+2k}$ etc, yielding%
\begin{equation}
|x_{n_{0}+(j+1)k}|\leq h(x_{n_{0}+jk})<|x_{n_{0}+jk}|\leq\cdots\leq
h(x_{n_{0}+k})<|x_{n_{0}+k}|\leq h(x_{n_{0}})<|x_{n_{0}}|<\alpha. \label{ch}%
\end{equation}

In particular, the sequence $|x_{n_{0}+jk}|$ decreases strictly as $j$
increases so if $\zeta=\inf_{j\geq1}|x_{n_{0}+jk}|>0$ then $h(\zeta)<\zeta$
since $\zeta<\alpha.$ On the other hand, (\ref{ch}) implies that
$\lim_{j\rightarrow\infty}h(x_{n_{0}+jk})=\zeta$ so $h$ being continuous and
$h(|u|)=h(u)$, it follows that $h(\zeta)=\zeta.$ But this contradicts the
inequality $h(\zeta)<\zeta$ unless $\zeta=0$.
\end{proof}

\medskip

As an immediate corollary we have the following result that also illustrates
the special role of the variable with the least time delay in the convergence
of the whole solution to zero.

\begin{corollary}
\label{ct}Assume that $F_{n}$ and $g$ are as in Theorem \ref{sc} and let
$\{x_{n}\}$ be a solution of (\ref{g}). If there is $n_{0}\geq0$ such that
$x_{n_{0}+i}\in(-\alpha,\alpha)\cap\pi_{k}(D)$ for $i=0,1,\ldots, k-1$ then
$\lim_{n\rightarrow\infty}x_{n}=0$. In particular, if $k=1$ and $x_{n_{0}}%
\in(-\alpha,\alpha)\cap\pi_{k}(D)$ then $\lim_{n\rightarrow\infty}x_{n}=0$.
\end{corollary}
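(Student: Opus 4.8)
The plan is to deduce Corollary \ref{ct} directly from Theorem \ref{sc} by applying the theorem repeatedly, once for each residue class modulo $k$. The key observation is that the conclusion $\lim_{n\to\infty}x_n=0$ is equivalent to the simultaneous convergence to zero of the $k$ interleaved subsequences $\{x_{n_0+i+kj}\}_{j\geq 0}$ for $i=0,1,\ldots,k-1$, since every index $n\geq n_0$ falls into exactly one of these arithmetic progressions. So it suffices to show each of these $k$ subsequences tends to zero.

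First I would fix $i\in\{0,1,\ldots,k-1\}$ and set $n_i = n_0+i$. By hypothesis $x_{n_i}=x_{n_0+i}\in(-\alpha,\alpha)\cap\pi_k(D)$, so the starting-point condition of Theorem \ref{sc} is met with $n_i$ in the role of $n_0$. Applying Theorem \ref{sc} then gives $\lim_{j\to\infty}x_{n_i+kj}=0$, i.e. $\lim_{j\to\infty}x_{n_0+i+kj}=0$. Carrying this out for each of the $k$ values of $i$ yields convergence of all $k$ subsequences, and since these subsequences partition the tail $\{x_n:n\geq n_0\}$, the full sequence converges to zero. For the special case $k=1$, there is only the single residue class $i=0$, and the hypothesis reduces to $x_{n_0}\in(-\alpha,\alpha)\cap\pi_k(D)$; a single application of Theorem \ref{sc} gives $\lim_{j\to\infty}x_{n_0+j}=0$, which is exactly $\lim_{n\to\infty}x_n=0$.

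I do not expect any serious obstacle here, since the corollary is essentially a bookkeeping argument over residue classes. The one point that warrants a brief check is that the index $n_0$ in the corollary is allowed to be $\geq 0$ whereas Theorem \ref{sc} requires $n_0\geq 1$; I would note that the corollary's hypotheses include the points $x_{n_0},\ldots,x_{n_0+k-1}$, so at least one index $n_0+i$ with $i\geq 1$ (hence $\geq 1$) is available, and the convergence of the full sequence is unaffected by the behavior at the single index $n_0$ if it equals $0$. The mild subtlety, then, is simply verifying that each shifted starting index lands in the valid range for Theorem \ref{sc} and that the union of the $k$ converging progressions genuinely exhausts all large $n$; both are immediate once stated explicitly.
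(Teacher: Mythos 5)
Your argument is correct and is exactly the one the paper intends: the paper offers no written proof, calling the result an ``immediate corollary'' of Theorem \ref{sc}, and the intended reasoning is precisely your application of the theorem to each of the $k$ residue classes $n_0+i \pmod{k}$, whose union exhausts the tail of the sequence. Your side remark about $n_0\geq 0$ versus $n_0\geq 1$ is a harmless technicality (the proof of Theorem \ref{sc} works verbatim for any starting index at which the solution is defined), so nothing essential is missing.
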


Theorem \ref{sc} applies to large classes of difference equations and systems.
To illustrate, consider the following higher-order equation a special case of
which is a biological population model for certain species (see below):%
\begin{equation}
x_{n}=x_{n-k}^{\lambda}e^{a_{n}-b_{1,n}x_{n-1}-b_{2,n}x_{n-2}-\cdots
-b_{m,n}x_{n-m}} \label{spm}%
\end{equation}
where%
\begin{equation}
\lambda>1,\quad k\leq m,\quad a_{n}\leq a<\infty,\quad b_{i,n}\geq
0,\ b_{k,n}\geq b>0 \label{cnst}%
\end{equation}
for all $n\geq1$ and $i=1,2,\ldots,m.$ We assume that $D=[0,\infty)^{2}$ and
consider only the non-negative solutions of (\ref{spm}).

A straightforward calculation verifies that the equation $u^{\lambda}%
e^{a-bu}=u$ has roots $u^{\ast}$ and $\bar{u}$ where $0<u^{\ast}\leq\bar{u}$
provided that
\begin{equation}
a\geq(\lambda-1)[1+\ln b-\ln(\lambda-1)]. \label{lam}%
\end{equation}

\begin{corollary}
\label{R}Assume that (\ref{cnst}) and (\ref{lam}) hold. If $\{x_{n}\}$ is a
positive solution of (\ref{spm}) and $x_{n_{0}}\in(0,u^{\ast})$ for some
$n_{0}\geq1$ then the terms $x_{n_{0}+jk}$ for $j=0,1,\ldots$ decrease
(strictly) to 0. If $k=1$ then the solution $\{x_{n}\}$ converges
monotonically to 0.
\end{corollary}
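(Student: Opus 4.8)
The plan is to deduce this corollary from Theorem \ref{sc} by exhibiting an explicit dominating function $g$ and threshold $\alpha$ for which hypotheses H1--H3 hold, with the distinguished variable being the one carrying the delay $k$. The natural candidates are $g(u)=u^{\lambda}e^{a-bu}$ and $\alpha=u^{\ast}$, the smaller positive root of $u^{\lambda}e^{a-bu}=u$; since $\pi_k(D)=[0,\infty)$ we have $(-\alpha,\alpha)\cap\pi_k(D)=[0,u^{\ast})$, on which $|u|=u$.

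First I would verify H1. For non-negative arguments the map $F_n(u_1,\dots,u_m)=u_k^{\lambda}e^{a_n-\sum_i b_{i,n}u_i}$ is itself non-negative, so $|F_n|=F_n$. Since every $b_{i,n}\geq0$ and every $u_i\geq0$, each factor $e^{-b_{i,n}u_i}\leq1$, so all exponential terms except the $k$-th may be discarded; using $a_n\leq a$ and $b_{k,n}\geq b$ then gives the exponent bound $a_n-\sum_i b_{i,n}u_i\leq a-bu_k$ and hence $|F_n(u_1,\dots,u_m)|\leq u_k^{\lambda}e^{a-bu_k}=g(u_k)$, which is exactly H1. Hypothesis H3 is immediate, since $g$ is continuous on $[0,\infty)$.

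The essential step is H2, i.e.\ showing $g(u)<u$ for $0<u<u^{\ast}$. Dividing by $u>0$, this is equivalent to $\psi(u)<1$ where $\psi(u)=u^{\lambda-1}e^{a-bu}$, and the roots of $g(u)=u$ are precisely the solutions of $\psi(u)=1$. I would analyze $\psi$ through $\ln\psi(u)=(\lambda-1)\ln u+a-bu$, whose derivative $(\lambda-1)/u-b$ vanishes only at $u=(\lambda-1)/b$; thus $\psi$ increases and then decreases, with $\psi(u)\to0$ as $u\to0^+$ (using $\lambda>1$) and as $u\to\infty$. Consequently $\psi$ attains its maximum at $(\lambda-1)/b$, and a short computation shows this maximum is $\geq1$ exactly when (\ref{lam}) holds; this is the same calculation that produces the roots $u^{\ast}\leq\bar u$ quoted before the statement. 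By unimodality $\psi$ lies strictly below $1$ on $(0,u^{\ast})$, which is H2 with $\alpha=u^{\ast}$.

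With H1--H3 in hand, Theorem \ref{sc} applies: if $x_{n_0}\in(0,u^{\ast})$ then $x_{n_0+jk}\to0$. Strict monotone decrease is not merely a by-product but is read off from chain (\ref{ch}) in the proof of Theorem \ref{sc}, which gives $x_{n_0+(j+1)k}\leq g(x_{n_0+jk})<x_{n_0+jk}$ as long as the iterates remain in $(0,u^{\ast})$; an induction confirms they do, since $g(u)<u<u^{\ast}$ there. Finally, when $k=1$ the subsequence $\{x_{n_0+j}\}$ is the entire tail of the solution, so Corollary \ref{ct} yields convergence of the whole solution, and the same chain furnishes the monotonicity. The only delicate point is the H2 analysis, but it is largely pinned down by the root information already granted in the text, leaving only the unimodality argument to identify $u^{\ast}$ as the first crossing.
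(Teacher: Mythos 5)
Your proposal is correct and follows essentially the same route as the paper: bound $F_n$ by $g(u_k)=u_k^{\lambda}e^{a-bu_k}$ using $b_{i,n}\geq 0$, $a_n\leq a$, $b_{k,n}\geq b$, take $\alpha=u^{\ast}$, and invoke Theorem \ref{sc}, with monotonicity read off from the chain (\ref{ch}) since absolute values drop for positive solutions. The only difference is that you spell out the unimodality argument verifying $g(u)<u$ on $(0,u^{\ast})$, which the paper relegates to the ``straightforward calculation'' preceding the corollary; your verification is accurate.
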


\begin{proof}
In this case $g(u)=h(u)$ since $u\geq0$. If $F_{n}(u_{1},\ldots,u_{m}%
)=u_{k}^{\lambda}e^{a_{n}-b_{k,n}u_{k}-\sum_{i\not =k}b_{i,n}u_{i}}$ then for
all $n$
\[
F_{n}(u_{1},\ldots,u_{m})\leq u_{k}^{\lambda}e^{a_{n}-b_{k,n}u_{k}}\leq
u_{k}^{\lambda}e^{a-bu_{k}}.
\]

With $g(u)=u^{\lambda}e^{a-bu}$ and $\alpha=u^{\ast}$ Theorem \ref{sc}
completes the proof of convergence to 0. The monotone nature of the converging
subsequence $\{x_{n_{0}+jk}\}$ is obvious from the proof of Theorem \ref{sc}
since all absolute values may be removed for positive solutions. The last
statement is now clear.
\end{proof}

Figures 1-4 illustrate the preceding results for the following third-order,
autonomous special case of (\ref{spm})%
\begin{equation}
x_{n}=x_{n-k}^{3/2}e^{3/2-0.7x_{n-2}-0.9x_{n-3}} \label{sp3}%
\end{equation}

All of the solutions of (\ref{sp3}) shown in the figures have the same initial
values $x_{0}=x_{1}=x_{2}=1.$ The only thing that is different in each figure
is the value of $k$.

\begin{figure}[tbp] 
  \centering
  \includegraphics[width=3.83in,height=1.99in,keepaspectratio]{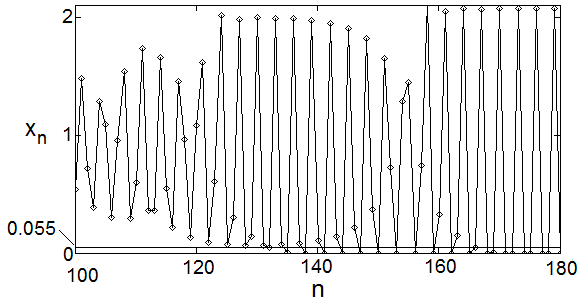}
  \caption{A solution with two subsequences converging to zero ($k=3$)}
  \label{Fig1}
\end{figure}

In Figure \ref{Fig1} where $k=3$, we see two subsequences converge to zero. At
$n=132$ one term of the solution crosses the threshold below which $g(u)<u$
for $u>0$ where $g(u)=u^{3/2}e^{3/2-0.9u}$; see Figure \ref{Fig2} (this
threshold is known as the Allee threshold in biological contexts; see below
for more details and references). Thus, according to Corollary \ref{R} the
subsequence $\{x_{132+3j}\}=\{x_{132},x_{135},x_{138},\ldots\}$ converges
monotonically to zero, as we also see in Figure \ref{Fig1}.

\begin{figure}[tbp] 
  \centering
  \includegraphics[width=2.27in,height=1.99in,keepaspectratio]{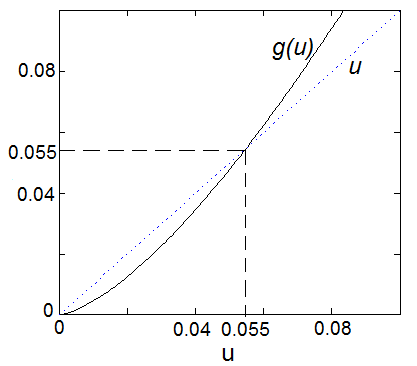}
  \caption{The bounding map $g$ and the threshold ($k=3$)}
  \label{Fig2}
\end{figure}

Corollary \ref{R} also permits additional subsequences to converge to zero and
this is seen in Figure \ref{Fig1} also. Specifically, at $n=166,$ which is not
an integer of type $132+3j,$ a term of the solution crosses the threshold so
that the subsequence $\{x_{166+3j}\}=\{x_{166},x_{169},x_{172},\ldots\}$ also
converges to zero monotonically. The remaining terms $\left\{  x_{167+3j}%
\right\}  =\{x_{167},x_{170},x_{173},\ldots\}$ converge in this case to a
positive fixed point of $g$. This may be due to the fact that for large $n$ 
and $k=3$ the terms $x_{165+3j}$ and $x_{166+3j}$ are nearly zeros, so
the remaining terms nearly satisfy the reduced expression on the right hand side
of (\ref{sp3}), i.e.
$$x_{n-3}^{3/2}e^{3/2-0.9x_{n-3}}$$
It follows that every third term is indistinguishable from the iterates of $g$
that start farther than 0.055 from the origin.

\begin{figure}[tbp] 
  \centering
  \includegraphics[width=3.57in,height=1.99in,keepaspectratio]{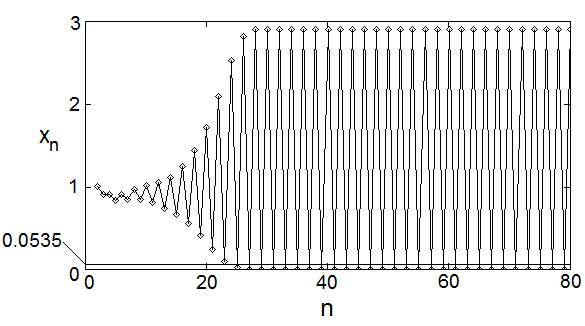}
  \caption{A solution with one subsequence converging to zero ($k=2$)}
  \label{Fig3}
\end{figure}

Figure \ref{Fig3} shows the solution that is generated when $k=2$, all other
parameters being the same. At $n=25$ a term of the solution crosses the
threshold below which $g(u)<u$ for $u>0$ where $g(u)=u^{3/2}e^{3/2-0.7u}$;
Thus, according to Corollary \ref{R} the subsequence $\{x_{25+2j}%
\}=\{x_{25},x_{27},x_{29},\ldots\}$ converges monotonically to zero, as is
also seen in Figure \ref{Fig3}.

\begin{figure}[tbp] 
  \centering
  \includegraphics[width=3.65in,height=1.99in,keepaspectratio]{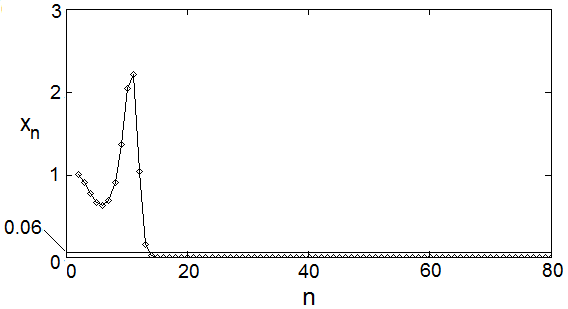}
  \caption{A solution that converges to zero ($k=1$)}
  \label{Fig4}
\end{figure}

In Figure \ref{Fig4} where $k=1$ we see that as soon as any term of the
solution crosses the threshold ($n=14$ in the figure) below which $g(u)<u$ for
$u>0$ where $g(u)=u^{3/2}e^{3/2-(0.7+0.9)u}$, the entire solution converges
monotonically to zero. This is also consistent with both Corollary \ref{ct}
and Corollary \ref{R}.

A special case of (\ref{spm}) appears in a biological population model that
exhibits the strong Allee effect; see \cite{Cour}, \cite{Cush}, \cite{ES},
\cite{LS}, \cite{LEO}\ for background and further information on this and
related models. Consider the planar system%
\begin{align}
x_{n+1}  &  =s_{n}y_{n}\label{aj1}\\
y_{n+1}  &  =x_{n}^{\lambda}e^{r_{n}-x_{n}-t_{n}y_{n}} \label{aj2}%
\end{align}
where $s_{n}\in(0,1],$ $t_{n}\in(0,\infty),$ $r_{n}\in(-\infty,\infty)$ are
given bounded sequences representing biological parameters, and $x_{n}$ and
$y_{n}$ denote the populations (or densities) of adults and juveniles,
respectively. In this model, all adults are removed from the population at the
end of each period either by natural death (as in the case of a semelparous
species, i.e. all adults die in each generation, and juveniles become adults
in the next) or through harvesting, predation, migration, etc. The system
(\ref{aj1})-(\ref{aj2}) folds to a second-order difference equation as
follows:%
\[
x_{n+2}=s_{n+1}y_{n+1}=s_{n+1}x_{n}^{\lambda}e^{r_{n}-x_{n}-t_{n}y_{n}}%
=x_{n}^{\lambda}e^{r_{n}+\ln s_{n+1}-(t_{n}/s_{n})x_{n+1}-x_{n}}%
\]

This equation is a special case of (\ref{spm}) as it can be stated
equivalently in the form%
\begin{equation}
x_{n}=x_{n-2}^{\lambda}e^{r_{n-2}+\ln s_{n-1}-(t_{n-2}/s_{n-2})x_{n-1}%
-x_{n-2}} \label{aj}%
\end{equation}

From initial values $x_{0}$ and $y_{0}$, we obtain $x_{1}=s_{0}y_{0}$ form
(\ref{aj1}). A solution $\{x_{n}\}$ of (\ref{aj}) is thus generated that
determines the adult population for $n\geq2$. The juvenile population is then
given by $y_{n}=x_{n+1}/s_{n}.$

Since (\ref{aj}) is a special case of (\ref{spm}), Corollary \ref{R} implies
that if $x_{n_{0}}\in(0,u^{\ast})$ for some even (or odd) $n_{0}\geq1$ then
the even (respectively, odd) terms of $\{x_{n}\}$ converge to 0 (also see
Corollary \ref{syst} below). This is consistent with the fact that adults are
absent every other period. In particular, if $x_{0}>0$ and $y_{0}=0$ (no
juveniles in the initial mix) then the solution starts oscillating from axis
to axis rather than converging to an oscillatory solution. But if the initial
mix contains some juveniles ($y_{0}>0$)\ then the orbit converges to a
solution that oscillates axis to axis. It is worth emphasizing that this
oscillation may not be periodic even with constant coefficients.

Theorem \ref{sc} applies to fixed points other than the origin via
translations, provided that these fixed points are not unstable or repelling.
To illustrate, consider the following equation%
\begin{equation}
x_{n}=\frac{a_{n}(x_{n-k}-b)^{p}}{1+c_{n}x_{n-l}^{q_{n}}}+b \label{n0}%
\end{equation}
where $0<a_{n}\leq a$ for some $a>0$ and $c_{n},b\geq0$ for all $n$. Further,
we assume that $q_{n}>0$ and to ensure that the right hand side of (\ref{n0})
returns real values, $p$ is a rational number of type $2\gamma/(2\delta-1)$
where $\gamma,\delta$ are positive integers. Note that (\ref{n0}) has a fixed
point at $\bar{x}=b.$ The special case of (\ref{n0}),%
\[
x_{n}=\frac{a_{n}x_{n-k}^{p}}{1+c_{n}x_{n-k}^{p}}%
\]
may be considered a version of the sigmoid Beverton-Holt equation in
\cite{HKK} that contains a delay $k\geq1$.

If $m=\max\{k,l\}$ then the underlying map sequence of (\ref{n0}) is%
\[
F_{n}(u_{1},u_{2},\ldots, u_{m})=\frac{a_{n}(u_{k}-b)^{p}}{1+c_{n}u_{l}%
^{q_{n}}}+b
\]
which unfolds to the map sequence%
\[
\widehat{F}_{n}(u_{1},u_{2},\ldots, u_{m})=\left[  \frac{a_{n}(u_{k}-b)^{p}%
}{1+c_{n}u_{l}^{q_{n}}}+b,u_{1},\ldots, u_{m-1}\right]
\]

Each $\widehat{F}_{n}$ has an invariant set $D=[0,\infty)^{m}$ that contains
the fixed point $(b,\ldots, b)$ in its interior if $b>0$. To apply Theorem
\ref{sc} we shift $\widehat{F}_{n}$ by translation so that the fixed point is
at origin$.$ We obtain%
\[
\widetilde{F}_{n}(u_{1},\ldots, u_{m})=\widehat{F}_{n}(u_{1}+b,\ldots,
u_{m}+b)-(b,\ldots, b)=\left[  \frac{a_{n}u_{k}^{p}}{1+c_{n}(u_{l}+b)^{q_{n}}%
},u_{1},\ldots, u_{m-1}\right]
\]

The invariant set for each $\widetilde{F}_{n}$ is $[-b,\infty)^{m}$ on which
we may fold $\widetilde{F}_{n}$ to
\[
F_{n}^{\ast}(u_{1},\ldots, u_{m})=\frac{a_{n}u_{k}^{p}}{1+c_{n}(u_{l}%
+b)^{q_{n}}}%
\]

Note that $F_{n}^{\ast}$ is defined on $[-b,\infty)$. Since $\left\vert
F_{n}^{\ast}(u_{1},\ldots, u_{m})\right\vert \leq a|u|_{k}^{p}$ if we define%
\[
g(u)=a|u|^{p},\quad u\geq-b
\]
then $\left\vert F_{n}^{\ast}(u_{1},\ldots, u_{m})\right\vert \leq g(u_{k})$.
Next, $g(u)<|u|$ for $u\not =0$ if and only if%
\[
|u|<a^{-1/(p-1)}%
\]

Let $\alpha=a^{-1/(p-1)}>0$. With $F_{n}^{\ast}$ defining the difference
equation
\[
y_{n}=\frac{a_{n}y_{n-k}^{p}}{1+c_{n}(y_{n-l}+b)^{q_{n}}}%
\]
Theorem \ref{sc} implies that if there is $n_{0}\geq1$ such that if $y_{n_{0}%
}\in(\max\{-\alpha,-b\},\alpha)$ then $\lim_{j\rightarrow\infty}y_{n_{0}%
+jk}=0$. Note that
\[
(\max\{-\alpha,-b\},\alpha)=(-\alpha,\alpha)\cap\pi_{k}([-b,\infty)^{m}).
\]

Since $x_{n}=b+y_{n}$ the following result is established.

\begin{corollary}
If $\{x_{n}\}$ is a solution of (\ref{n0}) such that $x_{0},\ldots, x_{m-1}>0$
and $x_{n_{0}}\in(\max\{0,b-a^{-1/(p-1)}\},b+a^{-1/(p-1)})$ for some
$n_{0}\geq1$ then $\lim_{j\rightarrow\infty}x_{n_{0}+jk}=b.$
\end{corollary}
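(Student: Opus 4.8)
The plan is to exploit the translation already set up in the discussion preceding the statement, so that essentially no new analysis is needed. Define $y_n = x_n - b$ for every $n$. Since $x_{n-k}-b = y_{n-k}$ and $x_{n-l} = y_{n-l}+b$, the recurrence (\ref{n0}) becomes precisely
\[
y_n = \frac{a_n y_{n-k}^{p}}{1 + c_n (y_{n-l}+b)^{q_n}},
\]
the equation governed by the folded map $F_n^{\ast}$ for which the bounding function $g(u)=a|u|^{p}$, together with $\alpha = a^{-1/(p-1)}$, has already been shown to satisfy H1--H3. Thus the whole task reduces to checking that the hypotheses imposed on $\{x_n\}$ translate into the hypotheses that Theorem \ref{sc} requires of $\{y_n\}$, and then undoing the shift.

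First I would confirm domain membership. The assumption $x_0,\ldots,x_{m-1}>0$ gives $y_0,\ldots,y_{m-1}>-b$, so the initial vector of the unfolded system lies in $[-b,\infty)^{m}$; since that set is forward invariant (as noted above for $\widetilde F_n$), the entire solution $\{y_n\}$ stays in $[-b,\infty)$, the domain on which $F_n^{\ast}$ and $g$ are defined. Next I would translate the threshold condition: subtracting $b$ from $x_{n_0}\in(\max\{0,\,b-a^{-1/(p-1)}\},\,b+a^{-1/(p-1)})$ yields $y_{n_0}\in(\max\{-b,-\alpha\},\,\alpha)$, which is exactly $(-\alpha,\alpha)\cap\pi_k([-b,\infty)^{m})$, the hypothesis of Theorem \ref{sc}. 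With both verifications in hand, Theorem \ref{sc} applies to the $y$-equation and gives $\lim_{j\to\infty} y_{n_0+jk}=0$, whence $\lim_{j\to\infty} x_{n_0+jk} = b + \lim_{j\to\infty} y_{n_0+jk} = b$, as claimed.

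The only step demanding genuine care — and therefore the main obstacle — is the bookkeeping of the interval endpoints under the translation, paired with the observation that positivity of the initial data is what forces the whole $y$-orbit to remain in the admissible set $[-b,\infty)$, so that Theorem \ref{sc} may be invoked with $D=[-b,\infty)^{m}$. Once these routine checks are dispatched, the conclusion is an immediate consequence of the groundwork already laid in the preceding discussion.
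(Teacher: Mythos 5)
Your proposal is correct and follows essentially the same route as the paper: the paper's own justification is precisely the translation $y_n=x_n-b$, the identification of $(\max\{-b,-\alpha\},\alpha)$ with $(-\alpha,\alpha)\cap\pi_k([-b,\infty)^m)$, and an appeal to Theorem \ref{sc} for the translated equation, after which the statement is declared "established." Your added care about forward invariance of $[-b,\infty)^m$ and the endpoint bookkeeping matches the groundwork the paper lays out immediately before the corollary.
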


If (\ref{n0}) has additional fixed points then we may apply the preceding
ideas to those other fixed points provided that they are not unstable. Also
worth mentioning, if $q_{n}$ is the same type of rational as $p$ for every $n$
then the function on the right hand side of (\ref{n0}) is defined on the
entire space $\mathbb{R}^{m}$ because the denominator remains positive. In
this case, $\{a_{n}\}$ maybe any bounded sequence in $\mathbb{R}$ as long as
$c_{n}\geq0$ for all $n.$ The preceding corollary is applicable in this
extended domain.

\section{Planar systems and population models}

Theorem \ref{sc} may be applied to discrete systems. The basic idea is to fold
the system to a higher-order equation as we showed above for the system
(\ref{aj1})-(\ref{aj2}), although we see in this section that actual folding
calculations are often unnecessary. Consider the planar system
\begin{align}
x_{n+1}  &  =f_{n}(x_{n},y_{n})\label{s11}\\
y_{n+1}  &  =g_{n}(x_{n},y_{n}) \label{s12}%
\end{align}
where $f_{n},g_{n}:D\rightarrow\lbrack0,\infty)$ are given sequences of
functions on a domain $D\subset\lbrack0,\infty)^{2}$ that is invariant for the
system. The system (\ref{aj1})-(\ref{aj2}) is of this type. From an initial
point $(x_{0},y_{0})\in D$ the correspondng solution of the system
(\ref{s11})-(\ref{s12}) is an orbit $\{(x_{n},y_{n})\}$ in the quadrant
$[0,\infty)^{2}$. To ensure that the origin is a fixed point of the system,
assume that for all $n$%
\begin{equation}
f_{n}(0,0)=g_{n}(0,0)=0. \label{o}%
\end{equation}

Applications of Theorem \ref{sc} to systems may occur in two different ways,
depending on the way the functions $f_{n},g_{n}$ relate to each other. Both
types of systems appear among biological population models. We distinguish
between these two types of systems via two different corollaries of Theorem
\ref{sc}.

Although folding calculations are often not necessary for our purposes in this
section, in principle folding is required in order to use Theorem \ref{sc}. So
we assume the following (see \cite{HS} for further details)

\begin{quote}
\textbf{H4.}\textit{ Each function }$f_{n}(u,v)$\textit{ is solvable for }%
$v$\textit{, i.e. there is a sequence of functions }$\sigma_{n}:[0,\infty
)^{2}\rightarrow\lbrack0,\infty)$\textit{ such that}
\[
w=f_{n}(u,v)\Rightarrow v=\sigma_{n}(u,w).
\]

\end{quote}

Alternatively, we might require $g_{n}(u,v)$ to be solvable for $u$ in an
analogous sense, if more feasible. Under suitable differentiability
hypotheses, the functions $\sigma_{n}$ can be shown to exist locally using the
implicit function theorem. However, in many applications, including in some
population models, (e.g. the system (\ref{rbh1})-(\ref{rbh2}) below) the
functions $\sigma_{n}$ can be obtained analytically by routine calculation.
Also in many cases, \textit{separable functions} of type%

\[
f_{n}(u,v)=\rho_{n}(u)\phi_{n}(v)\quad\text{or\quad}f_{n}(u,v)=\rho
_{n}(u)+\phi_{n}(v)
\]
appear where $\phi_{n}:[0,\infty)\rightarrow\lbrack0,\infty)$ is a bijection
and $\rho_{n}:[0,\infty)\rightarrow(0,\infty)$ for each $n$. In these cases
each $f_{n}$ is \textit{globally} solvable and we obtain the explicit
expressions
\[
\sigma_{n}(u,w)=\phi_{n}^{-1}\left(  \frac{w}{\rho_{n}(u)}\right)
\quad\text{or\quad}\sigma_{n}(u,w)=\phi_{n}^{-1}(w-\rho_{n}(u))
\]

For instance, the multiplicatively separable type occurs in (\ref{aj1}).
Population models where one equation in the system is multiplicatively or
additively separable frequently appear in the literature; see, e.g. \cite{AJ},
\cite{Cush2}, \cite{LS}, \cite{LP}.

Assuming H4 we obtain from (\ref{s11})%
\begin{equation}
y_{n}=\sigma_{n}\left(  x_{n},x_{n+1}\right)  \label{yn}%
\end{equation}

Using this relation and (\ref{s11})-(\ref{s12}) we obtain the second-order
scalar equation%
\[
x_{n+1}=f_{n}(x_{n},g_{n-1}(x_{n-1},y_{n-1}))=f_{n}(x_{n},g_{n-1}\left(
x_{n-1},\sigma_{n-1}\left(  x_{n-1},x_{n}\right)  \right)  )
\]
or equivalently, after an index shift,%
\begin{equation}
x_{n}=f_{n-1}(x_{n-1},g_{n-2}\left(  x_{n-2},\sigma_{n-2}\left(
x_{n-2},x_{n-1}\right)  \right)  ) \label{de}%
\end{equation}

The initial values for (\ref{de}) are $x_{0}$ and $x_{1}=f_{0}(x_{0},y_{0}).$
In the corresponding orbit $\{(x_{n},y_{n})\}$ of (\ref{s11})-(\ref{s12}) with
the initial point $(x_{0},y_{0})$, $x_{n}$ is determined as the solution of
(\ref{de}) together with $y_{n}$ found either using (\ref{yn}) or via
(\ref{s12}).

The right-hand side of (\ref{de}) defines the sequence of functions
\begin{equation}
F_{n}(u_{1},u_{2})=f_{n-1}(u_{1},g_{n-2}(u_{2},\sigma_{n-2}(u_{1},u_{2})))
\label{Fg}%
\end{equation}

\noindent\ on $[0,\infty)^{2}$ for $n\geq2.$ The next hypothesis allows one
possible application of Theorem \ref{sc}.

\begin{quote}
\textbf{H5}\textit{. There exist continuous functions }$\bar{f},\bar
{g}:[0,\infty)\rightarrow\lbrack0,\infty)$\textit{ such that (i) }$f_{n}%
(u_{1},u_{2})\leq\bar{f}(u_{2})$\textit{and }$g_{n}(u_{1},u_{2})\leq\bar
{g}(u_{1})$ \textit{for all} $u_{1},u_{2},n\geq0$\textit{ (ii) }$\bar{f}$ is
non-decreasing\textit{, (iii) there is }$\alpha\in(0,\infty]$\textit{ such
that }$\bar{f}(\bar{g}(u))<u$\textit{ for}$\ u\in(0,\alpha)$.
\end{quote}

The following is now implied by Theorem \ref{sc} with the composition $\bar
{f}\circ\bar{g}$ serving as the function $g$ in the theorem.

\begin{corollary}
\label{syst}Assume that the system (\ref{s11})-(\ref{s12}) satisfies
(\ref{o}), H4 and H5. If $\{(x_{n},y_{n})\}$ is an orbit of the system and
there is $n_{0}\geq1$ such that $x_{n_{0}}\in(0,\alpha)$ and $n_{0}$ is even
(odd) then the coordinates $x_{n}$ form a solution $\{x_{n}\}$ of (\ref{de})
whose even (respectively, odd) indexed terms converge monotonically to 0 for
$n\geq n_{0}$. The behavior of the sequence of components $y_{n}$ is
determined by (\ref{yn}) or directly from (\ref{s11})-(\ref{s12}).
\end{corollary}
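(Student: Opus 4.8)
The plan is to reduce Corollary \ref{syst} to a direct application of Theorem \ref{sc} by verifying that the folded equation (\ref{de}) with underlying map sequence $F_n$ from (\ref{Fg}) satisfies hypotheses H1--H3 with the composition $g := \bar f\circ\bar g$ playing the role of the bounding function. The key observation is that H4 legitimizes the folding, so that the orbit $\{(x_n,y_n)\}$ of (\ref{s11})--(\ref{s12}) gives rise to a genuine solution $\{x_n\}$ of the scalar second-order equation (\ref{de}); here $m=2$ and the relevant delay index will turn out to be $k=2$.

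First I would verify H1. Using (\ref{Fg}) together with the two bounds in H5(i), I compute
\[
F_n(u_1,u_2)=f_{n-1}(u_1,g_{n-2}(u_2,\sigma_{n-2}(u_1,u_2)))\leq \bar f\bigl(g_{n-2}(u_2,\sigma_{n-2}(u_1,u_2))\bigr)\leq \bar f(\bar g(u_2)),
\]
where the first inequality uses $f_{n-1}(u_1,v)\leq\bar f(v)$, the second uses $g_{n-2}(u_2,\cdot)\leq\bar g(u_2)$ together with the fact that $\bar f$ is non-decreasing (H5(ii)). Since all quantities are non-negative on $[0,\infty)^2$, absolute values are superfluous and we obtain $|F_n(u_1,u_2)|\leq g(u_2)$ with $g=\bar f\circ\bar g$; this is exactly H1 with $k=2$. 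Next, H5(iii) states precisely that $\bar f(\bar g(u))<u$ for $u\in(0,\alpha)$, which (again using non-negativity so that $|u|=u$ and $\pi_2(D)\subseteq[0,\infty)$) is H2 for the chosen $\alpha$. Finally, $g=\bar f\circ\bar g$ is continuous on $(0,\alpha)$ because $\bar f$ and $\bar g$ are continuous by H5, giving H3.

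With H1--H3 confirmed for (\ref{de}) and $k=2$, Theorem \ref{sc} applies: if $x_{n_0}\in(0,\alpha)$ for some $n_0\geq1$, then $\lim_{j\to\infty}x_{n_0+2j}=0$. Because $k=2$, the subsequence $\{x_{n_0+2j}\}$ consists of terms whose indices all share the parity of $n_0$; thus if $n_0$ is even the even-indexed terms converge to $0$, and if $n_0$ is odd the odd-indexed terms do. Monotone (strictly decreasing) convergence follows from the chain of inequalities (\ref{ch}) in the proof of Theorem \ref{sc}, which for non-negative solutions holds with the absolute values removed, exactly as in the proof of Corollary \ref{R}. The statement about the $y_n$ components is then immediate from (\ref{yn}), i.e. $y_n=\sigma_n(x_n,x_{n+1})$, or directly from (\ref{s12}).

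The main obstacle is the verification of H1, specifically getting the nesting of bounds right: one must apply $f_{n-1}(u_1,v)\leq\bar f(v)$ first and then, crucially, invoke the monotonicity of $\bar f$ in H5(ii) to replace the inner argument $g_{n-2}(u_2,\sigma_{n-2}(u_1,u_2))$ by its upper bound $\bar g(u_2)$ without reversing the inequality. If $\bar f$ were not assumed non-decreasing, this substitution would fail and the bound $F_n(u_1,u_2)\leq\bar f(\bar g(u_2))$ would not follow; this is exactly why H5(ii) is included. The remaining steps are routine once the dependence of $F_n$ on the single variable $u_2=x_{n-2}$ through the bound is isolated, confirming that $k=2$ is the correct delay index and hence that the converging subsequence is determined by the parity of $n_0$.
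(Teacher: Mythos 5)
Your proposal is correct and follows essentially the same route as the paper: the paper's proof consists precisely of the chain $F_n(u_1,u_2)\leq\bar f(g_{n-2}(u_2,\sigma_{n-2}(u_1,u_2)))\leq\bar f(\bar g(u_2))$ obtained from H5(i)--(ii), followed by an appeal to Theorem \ref{sc} with $g=\bar f\circ\bar g$ and $k=2$. You simply spell out more explicitly the verification of H2 and H3, the parity bookkeeping, and the source of monotonicity, all of which the paper leaves implicit.
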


\begin{proof}
If $F_{n}(u_{1},u_{2})$ is defined by (\ref{Fg}) then (i) and (ii) in H5 imply
that%
\[
F_{n}(u_{1},u_{2})\leq\bar{f}(g_{n-2}(u_{2},\sigma_{n-2}(u_{1},u_{2}%
)))\leq\bar{f}(\bar{g}(u_{2}))
\]

The proof is now completed by applying Theorem \ref{sc}.
\end{proof}

\medskip

Corollary \ref{syst} applies to the system (\ref{aj1})-(\ref{aj2}) without the
need to derive (\ref{aj}) by folding the system. If we define $f_{n}%
(u_{1},u_{2})=s_{n}u_{2}$ and $g_{n}(u_{1},u_{2})=u_{1}^{\lambda}%
e^{r_{n}-u_{1}-t_{n}u_{2}}$ then $\bar{f}(u)=u$ and $\bar{g}(u)=u^{\lambda
}e^{r-u}$ where $r=\sup r_{n}.$ Corollary \ref{syst} is applicable because
$f_{n},g_{n}$ and $\bar{f}\circ\bar{g}=\bar{g}$ satisfy H4 and H5. Once
$x_{n}$ is known we also obtain $y_{n}=x_{n+1}/s_{n};$ thus, if e.g.
$x_{2j+1}\rightarrow0$ (and $s_{2j}\not \rightarrow 0$) then $y_{2j}%
\rightarrow0.$ Further, since $s_{n}\leq1$ for all $n$ we see that if
$x_{2j+1}\not \rightarrow 0$ then $y_{2j}\not \rightarrow 0.$

\medskip

Theorem \ref{sc} also applies when H5 does not hold but the following does.

\begin{quote}
\textbf{H6}\textit{. There exists a continuous function }$\bar{f}%
:[0,\infty)\rightarrow\lbrack0,\infty)$\textit{ such that }$f_{n}(u_{1}%
,u_{2})\leq\bar{f}(u_{1})$\textit{ for all }$u_{1},u_{2},n\geq0$ \textit{and
there is }$\alpha\in(0,\infty]$\textit{ such that }$\bar{f}(u)<u$\textit{
for}$\ u\in(0,\alpha)$.
\end{quote}

If H6 holds then (\ref{Fg}) implies that $F_{n}(u_{1},u_{2})\leq\bar{f}%
(u_{1})$ so $\bar{f}$ may serve as the function $g$ in H1-H3. The following is
now a straightforward consequence of Theorem \ref{sc}.

\begin{corollary}
\label{syst0}Assume that the system (\ref{s11})-(\ref{s12}) satisfies
(\ref{o}), H4 and H6. If $\{(x_{n},y_{n})\}$ is an orbit of the system and
there is $n_{0}\geq1$ such that $x_{n_{0}}\in(0,\alpha)$ then the coordinates
$x_{n}$ form a solution $\{x_{n}\}$ of (\ref{de}) that converges monotonically
to 0 for $n\geq n_{0}$. The behavior of the sequence of components $y_{n}$ is
determined by (\ref{yn}) or directly from (\ref{s11})-(\ref{s12}).
\end{corollary}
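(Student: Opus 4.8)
The plan is to recognize that H6 places its bound on the \emph{first} coordinate of the folded map $F_n$, so that the relevant delay index in Theorem \ref{sc} is $k=1$; this is precisely what upgrades the generic subsequence conclusion to convergence of the whole solution, and it is the only structural difference from the situation of Corollary \ref{syst}. Accordingly, I would treat this as an application of the $k=1$ case of Corollary \ref{ct} rather than re-running the argument of Theorem \ref{sc} from scratch.

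First I would fold the system via H4 to the second-order scalar equation (\ref{de}), whose right-hand side is the map sequence $F_n(u_1,u_2)$ in (\ref{Fg}); as in the passage preceding Corollary \ref{syst}, the orbit's first coordinates $\{x_n\}$ form a solution of (\ref{de}) with initial data $x_0$ and $x_1=f_0(x_0,y_0)$. Next I would verify that H1--H3 hold for $F_n$ with $g=\bar f$ and $k=1$. For H1, substituting the bound $f_n(u_1,u_2)\le\bar f(u_1)$ from H6 into (\ref{Fg}) gives $F_n(u_1,u_2)=f_{n-1}(u_1,g_{n-2}(u_2,\sigma_{n-2}(u_1,u_2)))\le\bar f(u_1)$, and since $F_n\ge0$ this reads $|F_n(u_1,u_2)|\le\bar f(u_1)$, so $\bar f$ is the bounding function $g$ with distinguished index $k=1$. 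For H2, since $D\subset[0,\infty)^2$ we have $(-\alpha,\alpha)\cap\pi_1(D)=(0,\alpha)$, and H6 gives $\bar f(u)<u=|u|$ there for $u\neq0$; H3 is immediate from the continuity of $\bar f$ (which, together with $\bar f(u)<u$, also forces $\bar f(0)=0$, matching (\ref{o})).

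With H1--H3 in force for $k=1$, the hypothesis $x_{n_0}\in(0,\alpha)$ is exactly the condition $x_{n_0}\in(-\alpha,\alpha)\cap\pi_1(D)$ required by the $k=1$ case of Corollary \ref{ct}, which delivers $\lim_{n\to\infty}x_n=0$. For the monotonicity claim I would reread the chain (\ref{ch}) in the proof of Theorem \ref{sc}: with $k=1$ and non-negative terms the absolute values may be dropped (as in the proof of Corollary \ref{R}), so $x_n\le\bar f(x_{n-1})<x_{n-1}$ for all $n>n_0$, whence $\{x_n\}_{n\ge n_0}$ decreases strictly to $0$. The components $y_n$ are then recovered from (\ref{yn}) or directly from (\ref{s12}).

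I do not expect a genuine obstacle, since the bound $F_n(u_1,u_2)\le\bar f(u_1)$ is already noted in the text and the rest is bookkeeping against the hypotheses of Theorem \ref{sc}. The one point deserving emphasis is the contrast with Corollary \ref{syst}: there the H5 bounds \emph{compose} to control $u_2$ (hence $k=2$, and only a half-subsequence converges), whereas here the H6 bound already controls $u_1$ directly, collapsing the index to $k=1$ and thereby forcing the entire solution to converge.
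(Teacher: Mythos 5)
Your proposal is correct and follows essentially the same route as the paper, which simply observes that (\ref{Fg}) together with H6 gives $F_{n}(u_{1},u_{2})\leq\bar{f}(u_{1})$, so $\bar{f}$ serves as the function $g$ in H1--H3 with $k=1$ and the conclusion follows from Theorem \ref{sc}. Your write-up merely fills in the routine verification of H1--H3 and the dropping of absolute values for non-negative solutions, exactly as the paper intends.
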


An essential difference between the conclusions of corollaries \ref{syst} and
\ref{syst0} is the absence of oscillations in the latter. Corollary
\ref{syst0} corresponds to $k=1$ in Theorem \ref{sc} while Corollary
\ref{syst} to $k=2.$

Like Corollary \ref{syst}, in applying Corollary \ref{syst0} folding
calculations are not required. To illustrate, consider%
\begin{align}
x_{n+1}  &  =\frac{r_{1,n}x_{n}^{\delta_{1}}}{a_{1,n}+x_{n}^{\delta_{1}%
}+b_{1,n}y_{n}^{\delta_{3}}}\label{rbh1}\\
y_{n+1}  &  =\frac{r_{2,n}y_{n}^{\delta_{2}}}{a_{2,n}+y_{n}^{\delta_{2}%
}+b_{2,n}x_{n}^{\delta_{4}}} \label{rbh2}%
\end{align}
where we assume that%
\begin{equation}
b_{1,n},b_{2,n}\geq0,\quad a_{1,n},a_{2,n},r_{1,n},r_{2,n},\delta_{3}%
,\delta_{4}>0,\quad\delta_{1},\delta_{2}>1. \label{kp}%
\end{equation}

The autonomous version of (\ref{rbh1})-(\ref{rbh2}) with constant parameters
is introduced in \cite{Kang} as a generalized Ricker-Beverton-Holt type model
of competition for two species. Let
\[
a_{i}=\inf_{n\geq0}a_{i,n}>0,\quad r_{i}=\sup_{n\geq0}r_{i,n}<\infty,\quad
i=1,2.
\]
and
\[
\bar{f}(u_{1})=\frac{r_{1}u_{1}^{\delta_{1}}}{a_{1}+u_{1}^{\delta_{1}}}%
\]

Then (\ref{o}) and H6 hold. Setting $\bar{f}(u)<u$ gives%
\begin{equation}
\frac{r_{1}u^{\delta_{1}}}{a_{1}+u^{\delta_{1}}}<u\quad\text{or\quad}%
u^{\delta_{1}}-r_{1}u^{\delta_{1}-1}+a_{1}>0 \label{in1}%
\end{equation}

If (\ref{kp}) holds then this inequality is true for sufficiently small values
of $u$ so there is $\alpha>0$ for which the conclusion of Corollary
\ref{syst0} holds. Further, it is possible that $\alpha=\infty$ for some
parameter values; e.g. with
\[
\delta_{1}=2,\quad r_{1}^{2}<4a_{1}%
\]
the inequality in (\ref{in1}) holds for all $u\geq0$ so $x_{n}$ converges to 0
globally. In this case, $y_{n+1}\approx(r_{2}y_{n}^{\delta_{2}})/(a_{2}%
+y_{n}^{\delta_{2}})$ for all large $n$. If also $\delta_{2}=2$ and $r_{2}%
^{2}<4a_{2}$ then $y_{n}$ converges to 0 globally as well.

If $r_{1}^{2}\geq4a_{1}$ with $\delta_{1}=2$ then the quadratic inequality
$u^{2}-r_{1}u+a_{1}>0$ holds for $u\in(0,\alpha)$ where
\[
\alpha=\frac{1}{2}\left(  r_{1}-\sqrt{r_{1}^{2}-4a_{1}}\right)
\]

So if $x_{0}\in(0,\alpha)$ then $x_{n}\rightarrow0$ monotonically. This value
of $\alpha$ is sharp in the sense that it is the coordinate of an Allee fixed
point (when all parameters are constants with $\delta_{1}=2$); see \cite{Kang}
for further details about the behavior of solutions of (\ref{rbh1}%
)-(\ref{rbh2}).

\medskip

\begin{remark}
It may have been noticed that Corollary \ref{syst} involves a case where the
coordinate functions in the system are mixed to a greater degree than what is
seen in Corollary \ref{syst0}. For instance, consider the system%
\begin{align*}
x_{n+1}  &  =\frac{r_{1,n}y_{n}^{\delta_{1}}}{a_{1,n}+y_{n}^{\delta_{1}%
}+b_{1,n}x_{n}^{\delta_{3}}}\\
y_{n+1}  &  =\frac{r_{2,n}x_{n}^{\delta_{2}}}{a_{2,n}+x_{n}^{\delta_{2}%
}+b_{2,n}y_{n}^{\delta_{4}}}%
\end{align*}
which is obtained by switching $x_{n}$ and $y_{n}$ in (\ref{rbh1}%
)-(\ref{rbh2}). Straightforward calculations show that in this case Corollary
\ref{syst} may be applied, but not Corollary \ref{syst0}. On the other hand,
Corollary \ref{syst} does \textit{not} apply to (\ref{rbh1})-(\ref{rbh2})
where H5 does not hold.
\end{remark}

\medskip

\section{Conclusion and future directions}

Theorem \ref{sc} and its corollaries are quite general but they are
incomplete or not best possible. The mechanism that forces part of a solution 
to converge is not yet fully understood and a better understanding
of this mechanism may motivate research of possible future interest.

For instance, we showed that the nature of the delay in a higher order difference equation plays
a decisive role in which subsequences converge. However, the explanation of this issue
that is presented here is far from complete. 

Another issue of potential interest in the context of biological populations 
involves the connection to the strong Allee effect that was mentioned in 
the discussion after Corollary \ref{R}. For more details on this issue we
refer to \cite{LS1}.

For planar systems we obtained conditions on the system itself that imply the 
convergence of subsequences of orbits in the positive quadrant without the need 
to explicitly fold the system into a second-order equation. Thus, these conditions
simplified calculations.

Systems in higher dimensions also fold to scalar equations so Theorem \ref{sc} may be
applied directly once the system is folded. For example, consider the three dimensional 
system%
\begin{align*}
x_{n+1}  &  =e^{a_{n}-bx_{n}-cy_{n}-dz_{n}}\\
y_{n+1}  &  =p_{n}x_{n}+qz_{n}-r\ln z_{n}\\
z_{n+1}  &  =sx_{n}%
\end{align*}
where $a_{n},p_{n}\in(-\infty,\infty)$, $b,d\geq0$ and $c,q,r,s,x_{0},z_{0}%
>0$. By straightforward calculation:%
\begin{align*}
x_{n+3}  &  =e^{a_{n+2}-bx_{n+2}-cy_{n+2}-dz_{n+2}}\\
&  =e^{a_{n+2}-bx_{n+2}-ds_{n+1}x_{n+1}-c(p_{n+1}x_{n+1}+qsx_{n}-r\ln sx_{n}%
)}\\
&  =x_{n}^{cr}e^{a_{n+2}+cr\ln s-bx_{n+2}-(cp_{n+1}+ds)x_{n+1}-cqsx_{n}}%
\end{align*}

Shifting indices, the above third-order difference equation may be written as%
\begin{equation}
x_{n}=x_{n-3}^{cr}e^{a_{n-1}+cr\ln s-bx_{n-1}-(cp_{n-2}+ds)x_{n-2}-cqsx_{n-3}}
\label{spm3}%
\end{equation}

This equation is a special case of (\ref{spm}) so we may apply Corollary
\ref{R} in the study of the behavior of its solutions. 

In general, systems in three or higher dimensions are not easy to fold,
and even when folded, the resulting higher order equation may be long and 
complicated; see \cite{HSF}. Thus, developing higher dimensional
analogs of Corollaries \ref{syst} and \ref{syst0} based on suitable modifications 
of hypotheses H5 or H6 may simplify the analysis considerably.

Other issues of possible future interest include a classification of maps
$F_{n}$ for which there is a function $g$ of the type in hypotheses H1-H3.
Also, if possible, weakening some of these three hypotheses to allow for
greater flexibility would be desirable. Finally, it would be of interest to
identify applications of the results of this paper to discrete models outside
population biology.



\begin{thebibliography}{99}                                                                                               %


\bibitem {AJ}Ackleh, A.S. and Jang, S.R.-J., A discrete two-stage population
model: continuous versus seasonal reproduction, (2007) \textit{J. Difference
Eq. Appl}. \textbf{13}, 261-274

\bibitem {Cour}Courchamp, F., Berec, L. and Gascoigne, J., \textit{Allee
Effects in Ecology and Conservation}, (2008) Oxford University Press, Oxford

\bibitem {Cush}Cushing, J.M., Oscillations in age-structured population models
with an Allee effect, (1994) \textit{J. Comput. Appl. Math.}, \textbf{52}, 71-80

\bibitem {Cush2}Cushing, J.M., A juvenile-adult model with periodic vital
rates, (2006) \textit{J. Math Biol,,} \textbf{53}, 520-539

\bibitem {ES}Elaydi, S. N. and Sacker, R. J., Population models with Allee
effects: A new model, (2010) \textit{J. Biol. Dyn.}, \textbf{4}, 397-408

\bibitem {HKK}Harry, A.J., Kent, C.M. and Kocic, V.L., Global behavior of
solutions of a periodically forced sigmoid Beverton-Holt model, (2010)
\textit{J. Biol. Dyn.}, \textbf{6}, 212-234

\bibitem {Kang}Kang, Y., Dynamics of a generalized Ricker-Beverton-Holt
competition model subject to Allee effects, (2016) \textit{J. Difference Eq.
Appl.}, \textbf{22}, 687-723

\bibitem {LS}Lazaryan, N. and Sedaghat, H., Dynamics of planar systems that
model stage-structured populations, (2015) \textit{Discr. Dyn. Nature
Society}, Article ID 137182

\bibitem {LS1}Lazaryan, N. and Sedaghat, H., Extinction and the Allee effect
in an age structured Ricker population model with inter-stage interaction,
(submitted) arxiv.org 1702.02889

\bibitem {LP}Liz, E., Pilarczyk, P., Global dynamics in a stage-sturctured
discrete-time population model with harvesting, (2012) \textit{J. Theor.
Biol.} \textbf{297}, 148-165

\bibitem {LEO}Luis, R., Elaydi, S.N., and Oliveira, H., Non-autonomoous
periodic systems with Allee effects, (2010) \textit{J. Difference Eq. Appl.},
\textbf{16}, 1179-1196

\bibitem {HS}Sedaghat, H., Folding, cycles and chaos in planar systems, (2015)
\textit{J. Difference Eq. Appl.}, \textbf{21}, 1-15

\bibitem {HSF}Sedaghat, H., Folding difference and differential systems into
higher order equations, arXiv.org 1403.3995
\end{thebibliography}
\end{document}